\documentclass[a4paper,11pt]{amsart}

\usepackage[T1]{fontenc}
\usepackage[linktocpage]{hyperref}
\hypersetup{colorlinks=true,linkcolor=blue,citecolor=blue,filecolor=magenta,urlcolor=blue}      
\usepackage{geometry}       
\usepackage[english]{babel}  
\usepackage{amsthm,amsmath,amsfonts,amssymb}
\usepackage{tikz,tkz-graph,tikz-cd}
\usetikzlibrary{calc}
\usetikzlibrary{decorations.pathreplacing}
\usepackage{multirow}
\usepackage{booktabs}
\usepackage{pgfplots}

\newtheorem{theorem}{Theorem}[section]

\theoremstyle{definition}
\newtheorem{definition}[theorem]{Definition}

\theoremstyle{remark}
\newtheorem{rmk}[theorem]{Remark}

\newcommand{\CC}{\mathbb{C}}
\newcommand{\RR}{\mathbb{R}}

\newcommand{\QQ}{\mathbb{Q}}
\newcommand{\FF}{\mathbb{F}}
\newcommand{\PP}{\mathbb{P}}

\newcommand{\BB}{\mathbb{B}}

\newcommand{\A}{\mathcal{A}}

\newcommand{\q}{\mathbf{q}}
\newcommand{\Q}{\mathbf{Q}}

\renewcommand{\epsilon}{\varepsilon}

\begin{document}
	
	\title[LA-MOKA: Algorithm for the monodromy of line arrangements]{LA-MOKA: A Combinatorial Discretization Algorithm for the Braid Monodromy of Line Arrangements}
	
	\author[B. Guerville-Ball\'e]{Beno\^it Guerville-Ball\'e}
	\email{benoit.guerville-balle@math.cnrs.fr }
	\address{Matematica, Università di Sassari, via Vienna 2, 07100 Sassari, Italy}
	\thanks{}				
	
	\subjclass[2020]{
		32S22, 
		14H30, 
		14Q05, 
		65H14, 
	}		

	\begin{abstract}
		Computing braid monodromy is a key tool for studying the topology of algebraic curves in the complex projective plane $\PP^2$. We present an algorithm, named LA-MOKA, to compute this invariant in the specific case of complex line arrangements. To safely manage the problem of floating-point approximations when working over number fields, we translate the continuous geometry into a discrete combinatorial structure. We establish explicit conditions on this discretization that guarantee the topological correctness of the computed braid monodromy.
	\end{abstract}
	
	\maketitle
	
	
	\section*{Introduction}
	
	The study of the topology of complex line arrangements in the complex projective plane $\PP^2$ is a classical subject in algebraic geometry, located at the intersection of the theories of algebraic plane curves and hyperplane arrangements. A central question in this field is to understand the relationship between the combinatorics of an arrangement, typically encoded in its intersection lattice, and the embedded topology of the arrangement $\A$, namely the homeomorphism type of the pair $(\PP^2,\A)$. It is known, for example, that the intersection lattice does not strictly determine the fundamental group of the complement $\PP^2\setminus\A$, see~\cite{Ryb, ACCM}. This makes the study and the computation of topological invariants necessary to distinguish arrangements that share the same combinatorics.
	
	The \emph{braid monodromy} is a complete invariant of the embedded topology of an arrangement $\A = \{L_1,\dots,L_n\}$; see~\cite{Car,ACG}. It is defined by considering a generic line $D$ and the associated affine projection $\pi:\PP^2\setminus D \rightarrow \CC$. Then, for each singular fiber of this projection, we consider a homotopical meridian $\mu$ in $\CC$ around the image of the fiber, and look at the trace of $\A$ in $\pi^{-1}(\mu)$. Such traces can be seen as braids in $\BB_n$. The set of all these braids forms a representative of the braid monodromy of $\A$. Several well-known topological invariants of $\A$ can be computed from it. For example, it allows to compute the fundamental group of $\PP^2\setminus\A$ using the Zariski-van Kampen presentation~\cite{Zar, vKam}, or various linking invariants more recently introduced~\cite{AFG,Rod}.
	
	\medskip
	
	The existence of arithmetic Zariski pairs~\footnote{~An \emph{arithmetic Zariski pair} is a pair of Galois-conjugated arrangements (which thus have isomorphic intersection lattices), but that have different topologies.} shows that purely algebraic methods cannot compute the braid monodromy. However, when it comes to computing it numerically, the problem of the approximation by floating-point numbers appears. For general algebraic plane curves, Marco and Rodríguez developed a certified algorithm implemented in SageMath~\cite{Sage} under the package SIROCCO\footnote{~Sirocco is included as an optional package since version 8 of SageMath.}~\cite{MarMar:Sirocco}. This approach is effective for general curves; however, when applied specifically to line arrangements, the computation time increases significantly. Since it is made for the general case, it does not take advantage of the restrictive geometry of line arrangements. The purpose of the present paper is to describe an algorithm, named LA-MOKA\footnote{~Stands for Line Arrangement Monodromy Obtained by $k$-grids Approximations.}, specifically made for line arrangements. As mentioned above, the main technical aspect in constructing this algorithm is to manage floating-number while ensuring the topological correctness of the resulting braids.
	
	The idea underlying our approach is to reduce the geometric situation to a combinatorial one. More precisely, for each singular fiber $D_i$, we want to safely approximate the actual trajectories of the intersection points of $\A\cap F$, when the fiber $F$ varies from a fixed generic fiber $D$ to the singular one $D_i$. In each of these singular fibers $D_i$ and in the fixed generic one, we construct a $k$-grid that encodes the relative positions of these intersection points. The grid is refined until it is sufficiently fine to isolate each intersection point in a distinct subsquare of the grid, and satisfies additional combinatorial conditions; see~(\ref{eq:min/max_non_crossing_condition}) and~(\ref{eq:last_non_crossing_condition}). This allows us to approximate the trajectory of  $L \cap F$ by the straight segment joining the centers of the subsquares containing $L\cap D$ and $L\cap D_i$; see Figure~\ref{fig:grids}.
	
	\medskip
	
	The paper is organized as follows. In Section~\ref{sec:monodromy}, we recall the general construction of the braid monodromy and show how it is used to compute the fundamental group  $\pi_1(\PP^2\setminus\A)$ of a line arrangement. In Section~\ref{sec:algo}, we describe our algorithm, followed in Section~\ref{sec:comparison} by a comparison between our algorithm LA-MOKA and the well-established package SIROCCO. Finally, in Section~\ref{sec:proofs}, we provide the proofs of two specific technical points.
	
	\bigskip
	
	\noindent \textbf{Notation.} Throughout the paper, $\A = \{L_1 , \dots , L_n\}$ denotes a line arrangement in $\PP^2$, and $\Q = \{Q_1, \dots , Q_m\}$ denotes its set of singular points. The set $\{1,\dots,n\}$ is denoted by $[n]$.
	
	\section{The braid monodromy and fundamental group}\label{sec:monodromy}
	
	In this section, we recall the construction of the \emph{braid monodromy} of a line arrangement $\A$ in $\PP^2$. For a detailed and more general construction, we refer to~\cite{CohSuc:monodromy}. We then use this braid monodromy to compute presentations of the fundamental group $\pi_1(\PP^2\setminus\A)$ of the arrangement~$\A$. 
	
	\subsection{Construction of the braid monodromy}
	
	Let $D$ be a line of $\PP^2$ generic with respect to $\A$, and let $P_0$ be a fixed point of $D \setminus \A \cap D$. We assume that $P_0$ is such that for each $Q_i \in \Q$, the only point of $\Q$ contained in the line $D_i$ passing through $Q_i$ and $P_0$ is $Q_i$ itself. The data of $D$ and $P_0$ defines a projection $\pi : \PP^2 \setminus P_0 \to \PP^1$, which can be restricted to a projection $\pi_{\mid \CC^2} : \PP^2\setminus D \simeq \CC^2 \to \CC$. We denote by $\q = \{q_i := \pi(Q_i) \mid Q_i \in \Q\}$ the images of the points of $\Q$ under $\pi$. Note that the assumption on $D$ and $P_0$ implies that the $q_i$ are pairwise distinct.
	
	Let $p_0$ be the image of $D\setminus P_0$ under $\pi$, which we fix as a base point on $\PP^1$. Let $\gamma$ be an element of $\pi_1(\PP^1 \setminus \q , p_0)$. For all $t\in [0,1]$, the $\pi$-fiber over $\gamma(t)$ intersects the arrangement $\A$ generically, and satisfies $\pi^{-1}(\gamma(0)) = \pi^{-1}(\gamma(1)) = D\setminus P_0$. As a consequence, $\gamma$ induces an isotopy $h_\gamma$ of $\CC \simeq D \setminus P_0$ fixing the elements of $D \cap \A$ pointwise. Such isotopies are elements of the mapping class group of $(\CC , D\cap \A)$. As such, they can be viewed as braids in the braid group $\BB_n$ on $n$ strands. This defines a morphism $\phi_{(\A,D,P_0)} : \pi_1(\PP^1 \setminus \q, p_0) \to \BB_n$, often denoted simply $\phi_\A$.
	
	\medskip
	
	From an abstract point of view, $\pi_1(\PP^1\setminus \q, p_0)$ is isomorphic to $\FF_{m-1}$ the free group of rank~$m-1$. Nevertheless, we want to keep track of the geometric nature of this group. For $i\in [m]$, let $\gamma_i$ be a \emph{meridian} around $q_i$ based on $p_0$, that is to say: it starts at $p_0$, goes to a point near $q_i$ along a path $s_i$, loops around $q_i$, and returns to $p_0$ along $s_i$ in reverse. If $(\gamma_1, \dots, \gamma_m)$ satisfies the condition that the reverse product $\gamma_m \dots \gamma_1$ is trivial in $\pi_1(\PP^1\setminus \q, p_0)$ then it is called a \emph{geometric basis of meridians} in $\pi_1(\PP^1\setminus \q, p_0)$.
	
	\begin{definition}
		The braid monodromy associated with $(\A,D,P_0)$ is the $m$-tuple of braids:
		\begin{equation*}
			(\phi_\A(\gamma_1), \dots, \phi_\A(\gamma_m) ) \in (\BB_n)^m,
		\end{equation*}
		where $\gamma_1,\dots,\gamma_m$ is a geometric basis of meridians in $\pi_1(\PP^1\setminus \q, p_0)$.
	\end{definition}
	
	\begin{rmk}
		The braid monodromy associated with $(\A,D,P_0)$ is not well-defined. It depends on the ordering of the singular points and on the choice of the geometric basis. Nevertheless, it is well defined up to the Hurwitz action of $\BB_m$ on $(\BB_n)^m$ given by:
		\begin{equation*}
			(\phi_\A(\gamma_1), \dots, \phi_\A(\gamma_m) )^{\sigma_i} = (\phi_\A(\gamma_1), \dots, \phi_\A(\gamma_{i+1}), \phi_\A(\gamma_{i+1})\phi_\A(\gamma_{i})\phi_\A(\gamma_{i+1})^{-1}, \dots, \phi_\A(\gamma_m) ).
		\end{equation*}
		Additionally, changing the generic line $D$ or the projection point $P_0$ modifies the terms of the braid monodromy by conjugating them all by the same braid in $\BB_n$. Thus, the braid monodromy of an arrangement $\A$ (independently of the chosen projection) is well-defined up to Hurwitz action and simultaneous conjugation.
	\end{rmk}
	
	Since the meridian $\gamma_i$ is defined as a path $s_i$ from $p_0$ to a nearby point of $q_i$, a loop around $q_i$, and then the path $s_i$ backward, we can describe the braid $\phi_\A(\gamma_i)$ as the conjugate $b_i  (\Delta_i)^2  b_i^{-1}$, where $b_i$ is the braid over the path $s_i$, and $(\Delta_i)^2$ is a local full-twist of the strings corresponding to the lines of $\A$ passing through $Q_i$, see Figure~\ref{fig:monodromy}. When the braid monodromy is given as such conjugates, we call it a \emph{decomposed braid monodromy}. It is useful, for example, to compute Arvola's presentation of the fundamental group~\cite{Arv}.
	
	\begin{figure}[!ht]
		\begin{tikzpicture}
			\begin{scope}[scale = 0.66]
				\begin{scope}[xscale = 0.5, yslant=0.6]
					\draw (0,0) -- (0,6) -- (6,6) -- (6,0) -- (0,0);
					\node[right] at (6,6) {$D$};
					
					\node (P1) at (3.5,1) {};
					\node[left] () at (P1) {$L_4$};
					\node (P2) at (2.5,2.5) {};
					\node[left] () at (P2) {$L_3$};
					\node (P3) at (3,3) {};
					\node[left] () at (P3) {$L_2$};
					\node (P4) at (2.5,4.5) {};
					\node[left] () at (P4) {$L_1$};
					
					\node (b3) at (3,6) {};
					
					\draw[dotted] (0,0) -- (2,-3) -- (6,0);
					\node (p0) at (2,-3) {$\bullet$};
					\node[left] () at (2,-3) {$p_0$};
				\end{scope}
				
				\begin{scope}[xshift = 204, xscale = 0.5, yslant=0.6]
					\draw (0,0) -- (0,6) -- (6,6) -- (6,0) -- (0,0);
					\node (c1) at (0,0) {};
					\node (c2) at (0,6) {};
					\node[right] at (6,6) {$D_i$};
					
					\draw[dotted] (0,0) -- (2,-3) -- (6,0);
					\node (qi) at (2,-3) {$\bullet$};
					\node[below] () at (2,-3.25) {$q_i$};
					
					\node (Q1) at (3,3) {};
					\node (Q2) at (3,1) {};
					
					\node (r) at (9,0) {$\PP^2\setminus P_0$};
					\node (s) at (9,-6) {$\PP^1$};
					\draw[->>] (r) -- (s) node[pos = 0.5, right] {$\pi$};
				\end{scope}
				
				\begin{scope}[xshift = 190, xscale = 0.5, yslant=0.6]
					\draw[dashed] (0,0) -- (0,6) -- (6,6) -- (6,0) -- (0,0);
					\node (d1) at (0,0) {};
					\node (d2) at (0,6) {};
					
					\node (d3) at (3,6) {};
					
					\draw[dotted] (0,0) -- (2,-3) -- (6,0);
					\node (almost_qi) at (2,-3) {};
				\end{scope}
				
				\draw[line width=4pt, white] (P1.center) -- ($(P1.center)!4/5!(Q1.center)$r);
				\draw[thick] (P1.center) -- (Q1.center);
				\draw[line width=4pt, white] (P3.center) -- ($(P3.center)!4/5!(Q1.center)$);
				\draw[thick] (P3.center) -- (Q1.center);
				
				\draw[line width=4pt, white] (P4.center) -- (Q2.center);
				\draw[thick] (P4.center) -- (Q2.center);
				
				\draw[white, line width=4pt] ($(P2.center)!1/5!(Q1.center)$) -- ($(P2.center)!4/5!(Q1.center)$);
				\draw[thick] (P2.center) -- (Q1.center);
				
				\node[above right] () at (Q1) {$Q_i$};
				
				\draw[white, line width=4pt] ($(c1.center)!1/10!(c2.center)$) -- ($(c1.center)!9/10!(c2.center)$);
				\draw (c1.center) -- (c2.center);
				
				\draw[white, line width=4pt] ($(d1.center)!1/10!(d2.center)$) -- ($(d1.center)!9/10!(d2.center)$);
				\draw[dashed] (d1.center) -- (d2.center);
				
				\draw (p0.center) -- (almost_qi.center) node[pos=0.5,above] {$\gamma_i$};
				\draw ($(almost_qi.center)+(0.5,0)$) ellipse [x radius=0.5, y radius=0.25];
				
				\draw[decorate, decoration={brace, amplitude=5pt, raise=3pt}] ($(b3.east) - (0.05,0)$) -- node[above=8pt]{$b_i$} ($(d3.east) - (0.05,0)$);
				
				\node () at (Q1.center) {$\bullet$};
				
			\end{scope}
		\end{tikzpicture}
		\caption{Construction of the braid monodromy. \label{fig:monodromy}}
	\end{figure}
	
	\subsection{Presentations of the fundamental group}
	
	The braid group $\BB_n$ acts naturally on $\pi_1(\CC \setminus \{p_1,\dots,p_n\}) \simeq \FF_n$ as follows: for a generator $\mu_i$ of $\FF_n$, and the standard generator $\sigma_j$ of $\BB_n$ interchanging strands $j$ and $j+1$:
	\begin{equation*}
		\mu_i^{\sigma_j} =
		\left\{
		\begin{array}{ll}
			\mu_i \cdot \mu_{i+1} \cdot \mu_i^{-1} & \text{if } i= j, \\
			\mu_{i-1} & \text{if } i = j+1, \\
			\mu_i & \text{otherwise.}
		\end{array}
		\right.
	\end{equation*}
	Using this action, we can describe the so-called Zariski-van Kampen presentation of the fundamental group of $\PP^2\setminus\A$~\cite{Zar,vKam}; see also~\cite{Cog}.
	
	\begin{theorem}[\cite{Zar,vKam}]
		Let $\A$ be a line arrangement in $\PP^2$, and let $(\beta_1,\dots,\beta_m)$ be a representative of the braid monodromy of $\A$. We have the following presentation:
		\begin{equation*}
			\pi_1(\PP^2\setminus\A) \simeq \langle\  \mu_1, \cdots ,\mu_n \mid \mu_1 \cdots \mu_n = 1, \text{ and }  \mu_i = \mu_i^{\beta_j}\quad \text{for }i\in [n], j\in [m] \  \rangle.
		\end{equation*}
	\end{theorem}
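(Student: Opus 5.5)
The plan is the classical Zariski--van Kampen argument: remove the fiber through $P_0$, apply the van Kampen theorem to the resulting locally trivial fibration, and then put the fiber back. Set $X = \CC^2\setminus(\A\cup D) = (\PP^2\setminus D)\setminus\A$ and consider the restricted projection $\pi: X \to \CC$.

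\emph{Step 1: the fibration.} Over $\CC\setminus\q$, the map $\pi$ is a locally trivial fibration. Its fiber $F \simeq \CC\setminus\{p_1,\dots,p_n\}$ is the fiber over a point near $p_0$, viewed affinely. Local triviality follows from the genericity of $D$ and $P_0$, which gives exactly $n$ distinct transverse intersection points in each nonsingular fiber. An Ehresmann-type argument for the complement of the $n$ graphs $z = \ell_j(x)$ then finishes this step, using that the lines are graphs over $\CC$ since $P_0\notin\A$.

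\emph{Step 2: the homotopy sequence.} The fibration admits a section near infinity: take points of large modulus in each fiber, which avoid $\A$. Hence the long exact homotopy sequence splits, and
\[
\pi_1(\pi^{-1}(\CC\setminus\q)) \simeq \FF_n \rtimes \FF_{m}.
\]
Here $\FF_n=\langle\mu_1,\dots,\mu_n\rangle$ is the fiber group, $\FF_m$ is generated by lifts $\tilde\gamma_j$ of the meridians $\gamma_j$ in $\CC\setminus\q$, and the action is $\tilde\gamma_j^{-1}\mu_i\tilde\gamma_j = \mu_i^{\beta_j}$ (Artin's action via the monodromy $\phi_\A$). The working base is $\CC$ rather than $\PP^1$, so the $\gamma_j$ are free.

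\emph{Step 3: filling in the singular fibers.} Re-inserting the punctured singular fibers $\pi^{-1}(q_j)\cap X$ kills each $\tilde\gamma_j$. I would do this by a local van Kampen argument near $Q_j$: a small bidisk around the fiber shows that the horizontal loop becomes trivial once the fiber is added, because away from $Q_j$ the lines are transverse and near $Q_j$ the local picture is a cone. This yields $\pi_1(X) = \langle \mu_i \mid \mu_i = \mu_i^{\beta_j}\rangle$. I expect this step to be the main obstacle. The difficulty is that $\tilde\gamma_j$ must be chosen carefully, namely in the section near infinity. For that choice, the loop is nullhomotopic in $\pi^{-1}(\text{disk around }q_j)\cap X$, since the section extends over the disk. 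This extension is legitimate because $\A$ stays bounded over a compact disk.

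\emph{Step 4: passing from $\CC^2$ to $\PP^2$.} Finally, adding back $D\setminus\A$ corresponds to adding the line at infinity. Its meridian is the loop of large radius in the generic fiber, which equals $(\mu_1\cdots\mu_n)^{-1}$ with suitable orientations. Removing a generic line kills exactly this loop, so by a standard van Kampen argument (using a tubular neighbourhood of $D$) we add the relation $\mu_1\cdots\mu_n=1$. Since $(\beta_1,\dots,\beta_m)$ is only defined up to Hurwitz action and simultaneous conjugation, the last thing to check is that the presentation is invariant under these moves:
\begin{itemize}
\item Hurwitz moves replace $\beta_{i}$ by $\beta_{i+1}\beta_i\beta_{i+1}^{-1}$, whose invariance relations follow from those of $\beta_i$ and $\beta_{i+1}$.
\item Conjugation corresponds to an automorphism of $\FF_n$ fixing the product $\mu_1\cdots\mu_n$.
\end{itemize}
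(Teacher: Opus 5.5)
The paper does not prove this statement itself; it is quoted from Zariski and van Kampen. Your outline is precisely the classical argument behind that citation (fibration over $\CC\setminus\q$, split extension via a section at infinity, killing the section-lifted meridians $\tilde\gamma_j$ by refilling the singular fibers, then the relation $\mu_1\cdots\mu_n=1$ from the meridian of $D$), so it is correct. Only small touch-ups are needed:
\begin{itemize}
\item In Step~3, the fact that \emph{only} the $\tilde\gamma_j$ are killed follows most cleanly from the lemma that refilling a connected, closed, smooth complex curve $C\subset X$ kills exactly the normal closure of a meridian of $C$. Here $C=\pi^{-1}(q_j)\cap X$ avoids $Q_j$, so no cone analysis at $Q_j$ is needed.
\item In Step~4 you mean adding $D$ back, not removing it.
\item Your affine braids satisfy $\beta_m\cdots\beta_1=\Delta^2$, whereas the paper's projective convention ($\gamma_m\cdots\gamma_1=1$) is only consistent modulo the centre of $\BB_n$. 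You should therefore note that multiplying any $\beta_j$ by a power of $\Delta^2$ leaves the presentation unchanged once $\mu_1\cdots\mu_n=1$ is imposed, since $\Delta^2$ acts on $\FF_n$ as conjugation by $\mu_1\cdots\mu_n$.
\end{itemize}
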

	
	When it comes to computing a presentation of the fundamental group of a line arrangement explicitly, the Zariski-van Kampen one is often too costly. Indeed, the computation of the action of the braids $\beta_i$ on the free group $\FF_n$ provide quickly extremely long words. To shorten the words considered, and thus reduce the required computation time, we can rely on Arvola's presentation~\cite{Arv}, first developed by Randell for complexified real arrangements~\cite{Ran, Ran:correction}. Usually, the computation of this presentation is made using the \emph{braided wiring diagram}, which is a derived version of the braid monodromy. In this paper, we present a slightly different formulation of this presentation based on the decomposed braid monodromy\footnote{~The arguments given by Arvola in~\cite{Arv} justify in the same way the current formulation.}.
	
	\medskip
	
	For elements $a_1 , \dots , a_l$ in $\FF_n$, we define their \emph{generalized commutator}, denoted $[a_1,\dots,a_l]$, as the following set of relations where $\sigma$ runs over the cyclic permutations on $l$ elements.
	\begin{equation*}
		[a_1,\dots,a_l] = \{a_1 \cdots a_l = a_{\sigma(1)} \cdots a_{\sigma(l)} \}.
	\end{equation*}
	Note that this generalized commutator is sensitive to the order of the elements $a_1,\dots,a_l$. 
	
	Assume that we have a decomposed braid monodromy $(b_1.(\Delta_1)^2.b_1^{-1},\dots,b_m.(\Delta_m)^2.b_m^{-1})$. In the base-fiber $D\setminus P_0 \simeq \CC$, the lines of $\A$, and so the meridians $\mu_1,\dots,\mu_n$, are ordered according to the imaginary parts of the points $(D\setminus P_0) \cap L_i$, for $i\in [n]$. Through the usual morphism $\BB_n\rightarrow S_n$, the braid $b_i$ determines a permutation $\varsigma_i$. This permutation acts on the ordering of the lines in $D\setminus P_0$. In particular, it induces an ordering on the lines passing through $Q_i$. Let $Q_i \in \Q$ be a singular point of multiplicity $m_i$ and defined as the intersection of $L_{i_1}, \cdots, L_{i_{m_i}}$. Assume that the local ordering induced by $\varsigma_i$ is:
	\begin{equation*}
		L_{i_1} <_i L_{i_2} <_i \cdots <_i L_{i_{m_i}}.
	\end{equation*}
	If $(b_1.(\Delta_1)^2.b_1^{-1},\dots,b_m.(\Delta_m)^2.b_m^{-1})$ is a decomposed braid monodromy of $\A$, we define the following set of relations in $\FF_n = \langle \mu_1, \dots, \mu_n \rangle$:
	\begin{equation*}
		R_i = [\mu_{i_1}^{b_i}, \dots , \mu_{i_{m_i}}^{b_i}].
	\end{equation*}
	
	\begin{theorem}[\cite{Arv}]
		Let $\A$ be a line arrangement of $\PP^2$, and $(b_1.(\Delta_1)^2.b_1^{-1},\dots,b_m.(\Delta_m)^2.b_m^{-1})$ be a decomposed braid monodromy of $\A$. We have the following presentation:
		\begin{equation*}
			\pi_1(\PP^2\setminus\A) \simeq \langle\  \mu_1, \ldots ,\mu_n \mid \mu_1 \cdot \ldots \cdot \mu_n, \text{ and }\  R_i,\ i\in [m]  \rangle.
		\end{equation*}
	\end{theorem}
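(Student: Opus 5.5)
The plan is to deduce the statement from the Zariski--van Kampen presentation (the theorem of~\cite{Zar,vKam} recalled above), applied to the representative $\beta_i = b_i(\Delta_i)^2 b_i^{-1}$ of the braid monodromy. For each $i\in[m]$, the relations $\mu_j = \mu_j^{\beta_i}$, $j\in[n]$, will be shown equivalent in $\FF_n$ to $R_i$; replacing each block gives the presentation of the statement.

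First I use that the action of $\BB_n$ on $\FF_n$ is by automorphisms (a right action, written exponentially). Hence $\{\mu_j = \mu_j^{\beta_i}\}_{j}$ says that $\beta_i$ acts trivially modulo the normal closure $N$ of the relations. Now $\beta_i$ is a conjugate $b_i\,\tau\, b_i^{-1}$ of $\tau = (\Delta_i)^2$. The relations "$w=w^{\beta_i}$ for all $w$" are therefore the same as "$v = v^{\tau}$ for all $v$", after transporting by the automorphism induced by $b_i$. In particular, the relations $\mu_j=\mu_j^{\beta_i}$ are equivalent to $\nu = \nu^{\tau}$ for $\nu$ running over the basis $\{\mu_j^{b_i}\}$, with the exponent convention fixed so that it matches the definition of $R_i$. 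The orientation convention (whether $b_i$ or $b_i^{-1}$ is applied) must be checked carefully against the formula for $\mu_i^{\sigma_j}$ given above.

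Next comes the local computation. Since $\tau$ is the full twist on the strands of the lines through $Q_i$, which are consecutive in the order $<_i$ induced by $\varsigma_i$, it fixes all basis elements $\nu$ except the consecutive block $\nu_{1},\dots,\nu_{m_i}$ corresponding to $L_{i_1}<_i\dots<_i L_{i_{m_i}}$. On this block the full twist acts as conjugation by the product $P=\nu_1\cdots\nu_{m_i}$ (the center of $\BB_{m_i}$ acts by an inner automorphism on the sub-free group). Thus the relations become $\nu_k = P\nu_kP^{-1}$ for all $k$, i.e.\ $P$ commutes with each $\nu_k$. The standard algebraic step is then that "$P$ commutes with every $\nu_k$" is equivalent to the cyclic equalities $\nu_1\cdots\nu_{m_i} = \nu_{k+1}\cdots\nu_{m_i}\nu_1\cdots\nu_k$, which are the generalized commutator $R_i$. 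Indeed, $\nu_1P\nu_1^{-1} = \nu_2\cdots\nu_{m_i}\nu_1$, and iterating gives each cyclic shift; conversely, consecutive cyclic equalities give $\nu_kP=P\nu_k$.

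The main obstacle is the bookkeeping in the transport step. It requires verifying that the basis $\{\mu^{b_i}_{i_k}\}$ is exactly the one on which $(\Delta_i)^2$ acts as the local full twist in the order $<_i$. This involves the permutation $\varsigma_i$, the convention that strands are ordered by imaginary parts, and left versus right actions. I would settle this by checking the case where $b_i$ is a single generator $\sigma_j$, and then argue by composition. The relation $\mu_1\cdots\mu_n=1$ is untouched throughout, which completes the proof.
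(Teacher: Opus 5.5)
The paper gives no proof of this statement. It cites Arvola and remarks in a footnote that his braided wiring diagram argument carries over to the decomposed formulation. Deriving it from the Zariski--van Kampen presentation, as you do, is a reasonable self-contained alternative. Your endgame is fine: an automorphism that fixes a basis except for a block $x_1,\dots,x_r$, which it conjugates by $x_1\cdots x_r$, has exactly the generalized commutator $[x_1,\dots,x_r]$ as its invariance relations. (A small slip: it is $\nu_1^{-1}P\nu_1$, not $\nu_1P\nu_1^{-1}$, that equals $\nu_2\cdots\nu_{m_i}\nu_1$.)

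The genuine gap is the transport step, which carries all the content.

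First, for any automorphism $\phi$ of $\FF_n$, the normal closure of $\{x^{-1}x^{\phi}\}$ is the same for every basis $\{x\}$, because $(xy)^{-1}(xy)^{\phi}=y^{-1}(x^{-1}x^{\phi})y\cdot y^{-1}y^{\phi}$. So imposing $\nu=\nu^{\tau}$ on the basis $\{\mu_j^{b_i}\}$ gives exactly the relations of $\tau=(\Delta_i)^2$ alone, and $b_i$ has disappeared. Transporting by $\alpha\colon w\mapsto w^{b_i}$ changes the relators, not merely the basis: $\alpha(w^{-1}w^{\beta_i})=\alpha(w)^{-1}\alpha(w)^{\tau}$. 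Hence the relation subgroup of $\beta_i$ is the preimage under $\alpha$ of that of $\tau$, not the same subgroup.

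Second, your local claim is false. $\tau$ is a word in $\sigma_s,\dots,\sigma_{s+m_i-2}$ and acts as a local full twist on the standard basis, not on $\{\mu_j^{b_i}\}$. Take $n=3$, $b_i=\sigma_2$, $\tau=\sigma_1^2$, so $Q_i=L_1\cap L_3$. The element $\mu_2^{\sigma_2}=\mu_2\mu_3\mu_2^{-1}$, attached to $L_2$ (which does not pass through $Q_i$), is not fixed by $\sigma_1^2$. The single-generator check you plan would therefore refute your claim rather than confirm it.

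What is true, in your right-action convention, is $w^{\beta_i}=((w^{b_i})^{\tau})^{b_i^{-1}}$. Hence, setting $\tilde\mu_p:=\mu_p^{b_i^{-1}}$, one gets $\tilde\mu_p^{\beta_i}=(\mu_p^{\tau})^{b_i^{-1}}$. So it is $\beta_i$, not $\tau$, that acts on the basis $\{\tilde\mu_p\}$ as the local full twist. It fixes $\tilde\mu_p$ for $p\notin\{s,\dots,s+m_i-1\}$, where these are the positions at the end of $b_i$ of the strands through $Q_i$. It conjugates $\tilde\mu_s,\dots,\tilde\mu_{s+m_i-1}$ by their product. By basis independence, the relations $\mu_j=\mu_j^{\beta_i}$ are then equivalent to $[\tilde\mu_s,\dots,\tilde\mu_{s+m_i-1}]$, and your commutator argument concludes.

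Note that the block is indexed by these positions, not by the labels $i_k$. The element $\tilde\mu_{s+k-1}$ is a conjugate of $\mu_{i_k}$, and this is how $\mu_{i_k}^{b_i}$ in the statement has to be read (with a left action one gets $\mu_{s+k-1}^{b_i}$ instead). Applying $b_i^{\pm1}$ literally to the labels does not work. In the example above, the Zariski--van Kampen relations reduce to $\mu_1\mu_3=\mu_3\mu_1$, while $\mu_1^{\sigma_2}=\mu_1$ and $\mu_3^{\sigma_2}=\mu_2$. So "fixing the exponent convention" is not just a choice between $b_i$ and $b_i^{-1}$.
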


	\section{The algorithm LA-MOKA}\label{sec:algo}
	
	The algorithm presented below enable the computation of a decomposed braid monodromy of any line arrangement $\A$. It is made to be safe to use on a computer since it replaces the unsafe floating-point numbers used to approximate elements of a number field with integers. We denote by $\FF$ the field of definition of $\A$, that is the smallest subfield of $\CC$ containing the coefficients of all the lines of $\A$. Most of the time, it will be $\QQ$ or a number field. 
	
	\mbox{}
	
	\noindent \textbf{Step 1.} Find a generic projection.
	
	Take a random projective transformation over $\FF$ and apply it to $\A$. To keep the notation simple, the image of $\A$ will also be denoted by $\A$. Take a random line $D$ with coefficients in $\QQ[i]\setminus\{0\}$ if $\FF$ is totally real, and in $\QQ\setminus\{0\}$ otherwise\footnote{~We can take $D$ with coefficients in $\QQ[i]\setminus\{0\}$ even if $\FF$ is not totally real, but if $\FF$ does not already contain $i$, it makes things unnecessarily complicated.}. We denote by $P_0$ the intersection of $D$ with the line $z=0$. The resulting arrangement $\A$ and line $D$ must satisfy the following conditions:
	\begin{enumerate}
		\item The line $z=0$ is not a line of the arrangement $\A$.
		\item The line $D$ is generic with respect to $\A$, i.e., $D\cap\A$ is composed of $n$ distinct points.
		\item For each $Q_i \in \Q$, the line $D_i$ passing through $Q_i$ and $P_0$ intersects $\Q$ only at $Q_i$ itself.
		\item For all $i \in [m]$, the only point of $\q$ contained in the segment $[p_0,q_i]$ is $q_i$.
	\end{enumerate}
	Remark that all the conditions required are Zariski-open, so the random choices of the projective transformation and the line $D$ guarantee that the conditions are almost always satisfied. If this is not the case, you are unlucky and should repeat Step~1.
	
	\begin{rmk}
		We acknowledge that relying on randomness for genericity may seem counterintuitive in a certified method. If we want a deterministic search for the genericity, we can take an infinite family of projective transformations and of lines $D$, then try them one by one (as done in SIROCCO for example). The Zariski-open nature of these conditions guarantees, after a finite number of attempts, the genericity of the projective transformation and of the line $D$. While this may seem more rigorous, the initial choice of the infinite family is made arbitrarily by the person who implements the algorithm. Also, while relying on randomness, it is extremely likely that on the first attempt the genericity is already reached, while for the deterministic search this might take several attempts. So, to decrease the runtime of our algorithm, we decide to use randomness.
	\end{rmk}
	
	\mbox{}
	
	\noindent \textbf{Step 2.} Construction of good $k$-grids for the fibers $D$ and $D_i$.
	
	The idea of this step is to find, for each $F \in \{D,D_1,\dots,D_m\}$, a square $S_F$ subdivided into $k^2$ subsquares\footnote{~The value of $k$ should be the same for all $F\in\{D,D_1,\dots,D_m\}$.} such that all the intersections of $\A \cap F$  are contained in distinct subsquares of $S_F$. Moreover, these $k$-grids should be fine enough to enable the approximation of the points of $\A\cap F$ by the centers of the corresponding subsquares of the grid.
	
	\medskip
	
	Let $F\in \{D,D_1,\dots,D_m\}$. Since the line $z=0$ is not a line of $\A$, all the points of $F\cap\A$ are in the chart $z \neq 0$. So, we can identify $F\setminus P_0$ with $\CC$ through the map $p_F : (x:y:z) \mapsto x/z$. This is an isometry since all the coefficients of $D$ are non-zero.
	
	Consider $S_F$ a square of $\CC$ that is aligned with the axes of $\CC$, and that contains all the points of $p_F(F\cap\A)$. By increasing $k$, we can assume that the points of $p_F(F\cap \A)$ are in distinct subdivisions of the square $S_F$. We define the \emph{grid-map} as $g_F:\A\to [k]^2$ that associates with each line $L \in \A$ the indices of the subsquare of $S_F$ containing the point $p_F(F\cap L)$. The $(0,0)$ index is associated with the subsquare with the smallest real and imaginary parts. Note that the images under the grid-map $g_{D_i}$ of all lines passing through $Q_i$ are equal.
	
	\medskip
	
	For each $Q_i \in \Q$, and any pair of line $L_u$ and $L_v$ in $\A$, we define
	\begin{equation}\label{eq:def_abcd}
		\delta(u,v) = g_D (L_u) - g_D(L_v) \quad\text{and}\quad \delta_i(u,v) = g_{D_i}(L_u) - g_{D_i}(L_v).
	\end{equation}
	Remark that $\delta$ and the $\delta_i$ are skew-symmetric, i.e., $\delta(u,v) = - \delta(v,u)$ and $\delta_i(u,v) = - \delta_i(v,u)$.To lighten notation below, when $L_u$, $L_v$, and $i$ are fixed, we write $\delta(u,v) = (a,b)$ and $\delta_i(u,v) = (c,d)$. For example, instead of $\delta_i(u,v)_1$ we write $c$. The conditions required for the grid-maps are: for all $Q_i\in\Q$, and for all $L_u$ and $L_v$ in $\A$ such that $L_u\neq L_v$ and $L_u\cap L_v \neq Q_i$,
	\begin{enumerate}
		\item The values $a$, $b$, $c$ and $d$ are all non-zero. In other words, the subsquares containing the images of a point of $F\cap \A$ are in distinct rows and distinct columns.
		\item At least one of the following five inequalities is satisfied:
		\begin{equation}\label{eq:min/max_non_crossing_condition}\tag{C1}
			\min(a, c) \geq 1, \quad \max(a, c) \leq -1, \quad \min(b, d) \geq 1, \quad \max(b, d) \leq -1,
		\end{equation}
		\begin{equation}\label{eq:last_non_crossing_condition}\tag{C2}
			| ad - bc | > |a - c| + |b - d|.
		\end{equation}
	\end{enumerate}
	
	\medskip
	
	The randomness of $D$ ensures that almost always the points of $p_F(F\cap\A)$ have pairwise distinct real (resp. imaginary) parts. If this is not the case, we return to Step~1; otherwise, for sufficiently large $k$, the first condition will be satisfied. The second condition will be discussed in Section~\ref{subsec:condition}, and we will see that, here again, it is satisfied for $k$ sufficiently large.
	
	\begin{rmk}
		While iterating over $k$, ensure that the distance between the points $p_F(F\cap\A)$ and the grid boundaries is larger than the floating-point precision. This can be done, for example, by checking that for a point $x = L\cap F$ and $\epsilon \ll 1$, the four points $p_F(x\pm \epsilon)$ and $p_F(x\pm i\epsilon)$ are all contained in the same subsquare of the grid.
	\end{rmk}
	\mbox{}
	
	\noindent \textbf{Step 3.} Computation of the braid associated to $Q_i$.
	
	To compute the braid $\beta_i$ associated with $Q_i$, we consider the meridian $\gamma_i$ around $q_i$ whose path $s_i$ is contained in the segment joining $p_0$ to $q_i$. By Condition~(4) of Step~1, the path $s_i$ does not intersect any other $q_j$, with $j\neq i$. Since $s_i$ is on a straight line, the intersection of a line $L_u$ with the preimage of $s_i$ under $\pi$ is the segment $\ell_u^i$ joining $D\cap L_u$ to $D_i\cap L_u$. In Step~2, we compute two squares $S_D$ and $S_{D_i}$. We can lift them to $D$ and $D_i$ respectively, so all the segments considered are in the convex hull of $p_D^{-1}(S_D)$ and $p_{D_i}^{-1}(S_{D_i})$, which can be identified with a cube. This cube comes with two $k$-grids on two of its opposite faces, the ones included in $D$ and $D_i$. Conditions~(\ref{eq:min/max_non_crossing_condition}) and~(\ref{eq:last_non_crossing_condition}) ensure that we can approximate the segment $\ell_u^i$ by the one joining the center of the subsquares that contain the ends of $\ell_u^i$. See Figure~\ref{fig:grids}, the solid segment is $\ell_u^i$ and the dashed one is its approximation.
	
	\begin{figure}[!ht]
		\begin{tikzpicture}
			\begin{scope}[scale = 0.66]
				\begin{scope}[xscale = 0.5, yslant=0.6]
					\draw[->] (-2.5,6) -- (-2.5,4) node[pos=0.5,left] {$\rho_i$};
					\draw (0,0) -- (0,6) -- (6,6) -- (6,0) -- (0,0);
					\node[right] at (6,6) {$D$};
					\foreach \i in {1,...,5} {
						\draw (1,\i) -- (5,\i);
						\draw (\i,1) -- (\i,5);
					}
					\node (P1) at (1,1) {};
					\node (P2) at (1,5) {};
					\node (P3) at (5,5) {};
					\node (P4) at (5,1) {};
					
					\node (P) at (2.33,2.33) {$\bullet$};
					\node (p) at (2.5, 2.5) {$\bullet$};
				\end{scope}
				\begin{scope}[xshift = 200, xscale = 0.5, yslant=0.6]
					\draw (0,0) -- (0,6) -- (6,6) -- (6,0) -- (0,0);
					\node[right] at (6,6) {$D_i$};
					\foreach \i in {1,...,5} {
						\draw (1,\i) -- (5,\i);
						\draw (\i,1) -- (\i,5);
					}
					\node (Q1) at (1,1) {};
					\node (Q2) at (1,5) {};
					\node (Q3) at (5,5) {};
					\node (Q4) at (5,1) {};
					
					\node (Q) at (3.66,2.66) {$\bullet$};
					\node (q) at (3.5,2.5) {$\bullet$};
				\end{scope}
				\draw[dotted] (P1.center) -- (Q1.center);
				\draw[dotted] (P2.center) -- (Q2.center);
				\draw[dotted] (P3.center) -- (Q3.center);
				\draw[dotted] (P4.center) -- (Q4.center);
				\draw[thick] (P.center) -- (Q.center) node[pos=0.7,above] {$\ell_u^i$};
				\draw[dashed,thick] (p.center) -- (q.center);
			\end{scope}
		\end{tikzpicture}
		\caption{Grids in the fibers $D$ and $D_i$, and the approximation of $\ell_u^i$. \label{fig:grids}}
	\end{figure}
	
	Instead of doing the computation in the preimage of $s_i$ under $\pi$ and with the segments $\ell_u^i$, we can consider the situation where the ambient space is $\RR^3$ and the centers of the subsquares of the grids of $S_D$ and $S_{D_i}$ are respectively $(0,r,s)$ and $(1,r,s)$, for $r,s\in [k]$. Thus, the coordinates of the endpoints of the segments considered are given by the grid-maps $g_D$ and $g_{D_i}$.
	
	To describe the braid $\beta_i$, we take the projection $\rho_i$ onto the $x$ and $y$ coordinates, shown in Figure~\ref{fig:grids} by the arrow on the left, and we will describe the sign of the crossings. Note that due to Condition~(1) of Step~2, the endpoints of the segments project onto distinct points, except for those corresponding to the point $Q_i$. Nevertheless, the situation is no longer generic, so it is possible that the projection of three or more segments intersect at a single point. We will see later how to handle such a situation.
	
	Since all the computations are performed for a fixed $i\in[m]$, we omit this index from the notation. For $L_u$ and $L_v$ in $\A$ such that $L_u \cap L_v \neq Q_i$, we denote $l_u$ and $l_v$ their associated segments in $\RR^3$, and as in Step~2, we set $\delta(u,v)=(a,b)$ and $\delta_i(u,v)=(c,d)$. The segments $\rho(l_u)$ and $\rho(l_v)$ intersect if and only if $ac < 0$. Such a crossing appears when the first coordinate is $t_{u,v} = a / (a-c)$, and the sign of the crossing is the sign of $ad-cb$, which we denote by $e_{u,v}$.
	
	\medskip
	
	To explicitly give the braid word of $b_i$ in terms of the $\sigma_j$, the usual generators of the braid group $\BB_n$, we first label the strands according to their initial positions. That is, we create a vector $\nu = (\nu_1,\cdots,\nu_n)$ whose entries are the elements of $[n]$ and such that:
	\begin{equation*}
		g_D(L_{\nu_1})_1 < g_D(L_{\nu_2})_1 < \cdots < g_D(L_{\nu_n})_1,
	\end{equation*}
	where the subscript $1$ denotes the first coordinate of the $g_D(L_{\nu_j})$. Remark that this initial vector $\nu$ will be the same for all $i\in [m]$. Then, we order increasingly and according to their time of crossing $t_{u,v}$, the pairs $(u,v)$ forming a crossing. We initialize with the trivial braid $b_i = 1_{\BB_n}$, and run over the pairs $(u,v)$. We denote by $r$ the minimal index in $\nu$ between $u$ and $v$\footnote{~Remark that by construction $u$ and $v$ are adjacent in $\nu$.}, and we add to $b_i$ the term $\sigma_r^{e_{u,v}}$ if the index of $u$ in $\nu$ is smaller than the one of $v$, and $\sigma_r^{e_{v,u}}$ otherwise. Then, we permute in $\nu$ the position of $u$ and $v$. The braid $b_i$ obtained at the end of the process, is the braid $b_i$ of the decomposed braid monodromy. To get $\beta_i$, we need to consider the local full twist $(\Delta_i)^2$ associated to the singular points $Q_i$. Its expression in terms of the $\sigma_j$ is:
	\begin{equation*}
		\Delta_i = (\sigma_{s}\cdots\sigma_{s+m_i})(\sigma_{s}\cdots\sigma_{s+m_i-1})\cdots(\sigma_{s}),
	\end{equation*}
	with $m_i$ is the multiplicity of $Q_i$, and $s$ is the minimum among the indices in $\nu$ of $u_1,\dots,u_{m_i}$, where $L_{u_1},\dots,L_{u_{m_i}}$ are all the lines passing through $Q_i$.
	
	As noted before, it is possible that several pairs $(u,v)$ form a crossing at the same time $t_{u,v}$. To handle this, we proceed according to adjacency in $\nu$. Regardless of the order considered, the resulting braids $b_i$ will be equivalent, see Section~\ref{subsec:multiple_intersection}.
	
	\medskip
	
	For each $Q_i$, we obtain a braid $\beta_i = b_i.(\Delta_i)^2.b_i^{-1}$. A decomposed braid monodromy of $(\A,D,P_0)$ is then
	\begin{equation*}
		(\beta_1,\cdots,\beta_m) \in (\BB_n)^m.
	\end{equation*}
	
	\begin{rmk}
		A SageMath implementation of this algorithm is available at:
		\begin{center}
			\url{https://github.com/guerville-balle/LA-MOKA}
		\end{center}
	\end{rmk}

	\section{Performance comparison with SIROCCO}\label{sec:comparison}
	
	In this section, we compare the performance of the LA-MOKA algorithm presented in this paper with SIROCCO, the standard library developed by Marco and Rodríguez~\cite{MarMar:Sirocco}. As mentioned in the introduction, the algorithm implemented in SIROCCO is a general tool for computing braid monodromy of algebraic plane curves. As such, it is not optimized for the very specific case of line arrangements. In contrast, the LA-MOKA algorithm is specifically designed for line arrangements and exploits its geometry. Obviously, the counterpart of this specificity is that LA-MOKA does not work for general algebraic curves.
	
	To compare the efficiency of LA-MOKA and SIROCCO, we conducted a benchmark\footnote{~All the computations have been performed on a personal laptop with a CPU: 12th Gen Intel(R) Core(TM) i5-1230U, and 8Go of RAM.} on line arrangements with $n$ lines, for $n\in\{5,6,7,8,9\}$. To avoid, as much as possible, bias in the selection of the examples, we ran tests over all the line-combinatorics with a non-empty realization space. For each one, we selected $10-n$ representatives and compute its braid monodromy using both algorithms. We also recorded, for each $n$, of the maximal execution time. All results are summarized in Table~\ref{tab:benchmark}, with execution times expressed in seconds.
	
	\begin{table}[h!]
		\centering
		\renewcommand{\arraystretch}{1.2} 
		\begin{tabular}{|c|cc|cc|}
			\hline
			\multirow{2}{*}{\textbf{Lines ($n$)}} & \multicolumn{2}{c|}{\textbf{SIROCCO}} & \multicolumn{2}{c|}{\textbf{LA-MOKA}} \\
			\cline{2-5}
			& \textbf{Avg Time (s)} & \textbf{Max Time (s)} & \textbf{Avg Time (s)} & \textbf{Max Time (s)} \\
			\hline
			5 & 0.4131 & 0.4523 & 0.0508 & 0.0633 \\
			6 & 0.6824 & 0.9141 & 0.0536 & 0.0675 \\
			7 & 1.4921 & 2.2297 & 0.0853 & 0.1131 \\
			8 & 3.3134 & 26.3434 & 0.1254 & 0.1933 \\
			9 & 9.8892 & 500.2286 & 0.1923 & 0.5792 \\
			\hline
		\end{tabular}
		\vspace{0.5cm}
		\caption{Performance comparison between SIROCCO and LA-MOKA for line arrangements with $n$ lines, for $n\in\{5,6,7,8,9\}$.\label{tab:benchmark}}
	\end{table}
	
	The benchmark data highlights two clear phenomena:
	\begin{enumerate}
		\item \textbf{Runtime growth}: While the average runtime of SIROCCO grows rapidly from $0.41$ seconds at $n=5$ to nearly $10$ seconds at $n=9$, the average execution time of LA-MOKA grows minimally, staying well below $0.20$ seconds for $n=9$.
		\item \textbf{Variability}: The maximum observed runtime for SIROCCO of $26.3$ seconds at $n=8$ (corresponding to $7.9$ times the average) and over $500$ seconds at $n=9$ ($\sim 8.3$ minutes, or $50.5$ times the average), illustrates the high variability of this algorithm. In contrast, the maximum runtime for LA-MOKA at $n=9$ remains under $0.58$ seconds (about $3$ times the average), demonstrating high predictability.
	\end{enumerate}
	
	The evolution of the average runtimes is depicted in Figure~\ref{fig:benchmark_graph}. The plot illustrates the low growth of LA-MOKA relative to the steep growth curve of SIROCCO. By replacing root continuation with discrete $k$-grids, LA-MOKA algorithm provides a significant performance gain specifically tuned for line arrangements.
	
	\begin{figure}[h!]
		\centering
		\begin{tikzpicture}
			\begin{axis}[
				title={Average Computation Time of Braid Monodromy},
				xlabel={Number of lines ($n$)},
				ylabel={Average Time (seconds)},
				xmin=4.5, xmax=9.5,
				ymin=0, ymax=11,
				xtick={5, 6, 7, 8, 9},
				legend pos=north west,
				ymajorgrids=true,
				grid style=dashed,
				width=0.8 \textwidth,
				height=0.48 \textwidth,
				]
				
				\addplot[
				color=blue,
				mark=square*,
				thick
				]
				coordinates {
					(5, 0.4131)
					(6, 0.6824)
					(7, 1.4921)
					(8, 3.3134)
					(9, 9.8892)
				};
				\addlegendentry{SIROCCO}
				
				\addplot[
				color=red,
				mark=triangle*,
				thick
				]
				coordinates {
					(5, 0.0508)
					(6, 0.0536)
					(7, 0.0853)
					(8, 0.1254)
					(9, 0.1923)
				};
				\addlegendentry{LA-MOKA}
				
			\end{axis}
		\end{tikzpicture}
		\caption{Comparison of the average computation time between SIROCCO and LA-MOKA as the number of lines $n$ increases.}
		\label{fig:benchmark_graph}
	\end{figure}
	
	\begin{rmk}
		In~\cite[Sec.~4]{MarMar:Sirocco}, the authors also perform a benchmark to compare SIROCCO with an algorithm developed by Beltrán and Leykin~\cite{BelLey}. To implement it, they took random polynomials of degree ranging from $4$ to $14$. The runtime behavior as degree increases is far from what we observed in our benchmark. This is easily explained by their choice of polynomials: the probability of generating a line arrangement at random is zero. Therefore, their benchmark does not reflect the particular situation that we are considering in this paper.
	\end{rmk}

	\section{Proofs}\label{sec:proofs}
	
	In this section, we detail the proofs of two technical facts used in Section~\ref{sec:algo}. The first explains why Conditions~(\ref{eq:min/max_non_crossing_condition}) and~(\ref{eq:last_non_crossing_condition}) in Step~2 justify the approximation using the centers of the subsquares, while the second addresses how to handle multiple synchronous crossings in Step~3.
	
	\subsection{The condition on the $k$-grid}\label{subsec:condition}
	
	Let us recall the context, we have a segment $\ell_u$ that originating in the subsquare $T_u$ of $S_D$ with indices $g_D(L_u)$ and ending in the subsquare $T_u^i$ of $S_{D_i}$ with indices $g_{D_i}(L_u)$. We want to approximate $\ell_u$ with the segment $l_u$ starting and ending in the centers of same subsquares of $S_D$ and $S_{D_i}$. This can be done safely if for all $L_u, L_v \in \A$, and for all segments $d_u$ (resp. $d_v$) originating in $T_u$ (resp. $T_v$) and ending in $T_u^i$ (resp. $T_v^i$), we have $d_u \cap d_v = \emptyset$. This can be reformulated as follows: the intersection of the convex hull of $T_u$ and $T_u^i$ with the convex hull of $T_v$ and $T_v^i$ is empty.
	
	We now prove that satisfying at least one of the inequalities in Conditions~(\ref{eq:min/max_non_crossing_condition}) and~(\ref{eq:last_non_crossing_condition}) is equivalent to the disjointness of the two previous convex hulls. Up to an isometry, we can assume that $g_{D}(L_v) = g_{D_i}(L_v) = (0,0)$. In this setting, $g_{D}(L_u) = (a,b)$ and $g_{D_i}(L_u)=(c,d)$. Additionally, since initial and final points of $\ell_u$ and $\ell_v$ should be in the interior of the subsquares, we have to consider only the interior of the convex hulls.
	
	\medskip
	
	Let $H_u$ be the interior of the convex hull of $T_u$ and $T_u^i$. Its intersection with the plan $x = t$, for $t\in [0,1]$, is the interior of the unit square of the plane $x=t$ centered at $(t, (1-t)a + tc, (1-t)b + td)$. Let $H_v$ be the interior of the convex hull of $T_v$ and $T_v^i$, that is the open unit cube $(0,1)\times (-\frac{1}{2},\frac{1}{2})^2$. Its intersection with the plane $x = t$, for $t\in [0,1]$, is the interior of the unit square in the plane $x=t$ centered at $(t,0,0)$. The two solids overlap if and only if there exists $t\in [0,1]$ such that the distance between their centers in both the $y$- and $z$-coordinates is less than or equal to $1$. In other words, the two solids are disjoint if and only if, for all $t\in [0,1]$ we have:
	\begin{equation}\label{eq:disjoint_solids}
		1 \leq |(1-t)a + tc| \quad \text{or} \quad 1 \leq |(1-t)b + td|.
	\end{equation}
	
	\begin{theorem}\label{thm:conditions}
		The solids $H_u$ and $H_v$ are disjoint if and only if at least one of the following conditions holds:
		\begin{enumerate}
			\item $\min(a, c) \geq 1$
			\item $\max(a, c) \leq -1$
			\item $\min(b, d) \geq 1$
			\item $\max(b, d) \leq -1$
			\item $|ad - bc| \geq |a - c| + |b - d|$
		\end{enumerate}
	\end{theorem}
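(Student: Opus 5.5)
The plan is to reduce the statement to a planar question and then apply the separating axis theorem for convex polygons. By~(\ref{eq:disjoint_solids}), $H_u\cap H_v=\emptyset$ holds if and only if the segment $\Sigma=[P,R]$, with $P=(a,b)$ and $R=(c,d)$, does not meet the open square $U=(-1,1)^2$. Indeed, the point $(1-t)P+tR$ lies in $U$ exactly when both $|(1-t)a+tc|<1$ and $|(1-t)b+td|<1$. So the theorem becomes: the closed segment $\Sigma$ misses the open square $U$ if and only if one of the conditions (1)--(5) holds.

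\emph{Sufficiency.} Each condition exhibits a closed half-plane containing $\Sigma$ and disjoint from $U$.
\begin{itemize}
\item For (1), both endpoints satisfy $x\geq 1$, hence so does all of $\Sigma$ by convexity, and $U\subset\{x<1\}$. Conditions (2)--(4) are handled identically.
\item For (5), take the normal $n=(b-d,\,c-a)$ to $\Sigma$. Then $n\cdot P=n\cdot R=bc-ad$, so the whole line through $P,R$ lies in $\{n\cdot z=bc-ad\}$. On the other hand, $\sup_{z\in[-1,1]^2}|n\cdot z|=|a-c|+|b-d|$, and this supremum is not attained on the open square $U$.
\item Therefore, if $|ad-bc|\geq |a-c|+|b-d|$, the line through $P$ and $R$ misses $U$. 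In the degenerate case $P=R$ the inequality reads $0\geq 0$, and I handle it separately. Since $a,b$ are nonzero integers (Condition~(1) of Step~2), one of (1)--(4) already holds there, so the point case is consistent.
\end{itemize}

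\emph{Necessity.} Suppose $\Sigma\cap U=\emptyset$. The sets $\Sigma$ and $\overline{U}$ are convex polygons (one possibly degenerate), with $U$ open, so a separating line exists. I would invoke the separating axis theorem in the following form: two disjoint convex polygons, one of them open, can be separated by a line parallel to an edge of one of them. This could also be proved directly by rotating a separating line until it touches a vertex of one body and an edge direction of the other.
\begin{itemize}
\item If the separating direction is parallel to an edge of the square, the line is horizontal or vertical. Then $\Sigma$ lies in one of $\{x\geq 1\}$, $\{x\leq -1\}$, $\{y\geq 1\}$, $\{y\leq -1\}$. Since $\Sigma$ is the convex hull of its endpoints, this is exactly (1)--(4).
\item If the separating direction is parallel to $\Sigma$ itself, the separating line may be taken to be the line through $P$ and $R$. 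Its missing $U$ means $|n\cdot P|\geq \sup_{U}|n\cdot z|=|a-c|+|b-d|$, which is (5).
\end{itemize}

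The main obstacle is making the necessity step rigorous without hand-waving about the separating axis theorem. The difficulty is the passage from "some separating line" to "a separating line in one of these three directions". I would do this concretely.
\begin{itemize}
\item Assume none of (1)--(4) holds. Then the projections of $\Sigma$ onto the $x$-axis and onto the $y$-axis both meet $(-1,1)$.
\item The parameter sets $\{t : |(1-t)a+tc|<1\}$ and $\{t : |(1-t)b+td|<1\}$ are then two nonempty open subintervals of $[0,1]$, and they are disjoint.
\item So $\Sigma$ passes from the vertical strip to the horizontal strip while avoiding the square. This forces it to cross the two strips on opposite sides of a corner $(\pm1,\pm1)$ of the square, and the line through $P,R$ then has the square entirely on one side.
\item Finally, a sign computation at that corner, using that $n\cdot z$ is extremal on $[-1,1]^2$ at a vertex, yields (5).
\end{itemize}
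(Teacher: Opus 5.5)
Your proposal is correct, and its rigorous core is the paper's proof in geometric language. The sufficiency of (5) via the normal $n=(b-d,c-a)$ and $\sup_{[-1,1]^2}|n\cdot z|=|a-c|+|b-d|$ is the paper's identity $ad-bc=f_1(t)(d-b)-f_2(t)(c-a)$ combined with the triangle inequality. Your concrete necessity argument (two nonempty, disjoint parameter intervals forcing $\Sigma$ past a corner of the square) is exactly the paper's $I_1,I_2$ argument, with the corner sign computation being the endpoint comparison $\frac{1-a}{p}\le\frac{-1-b}{q}$ where $p=c-a>0$, $q=d-b>0$.

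The separating-axis framing is a legitimate shortcut if you cite it. It is also immediate once you note that $\Sigma\cap U=\emptyset$ iff $0\notin U-\Sigma$, an open convex polygon whose edges are horizontal, vertical, or parallel to $\Sigma$. You are in fact more careful than the paper on two points: the strict inequality needed in the sufficiency of (5), and the degenerate case $P=R$, both of which the paper's ``$\leq$ contradicts $\geq$'' step glosses over.
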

	
	\begin{proof}
		Let $(f_1(t), f_2(t)) = ( (1-t)a + tc, (1-t)b + td )$ be the $y$ and $z$ coordinates of the center of the intersection of $H_u$ with the plane $x=t$, the $x$ one being obviously $t$.
		
		\medskip
		
		If one of the conditions~(1) to~(4) is verified, it is direct to check that one of the two inequalities in~(\ref{eq:disjoint_solids}) is verified. For example, if $\min(a,c) \geq 1$, then $f_1(t) \geq (1-t) + t \geq 1$. Assume that the fifth condition $|ad - bc| \geq |a - c| + |b - d|$ holds. By contradiction, we assume that there exists $t\in[0,1]$ such that both of the inequalities of~(\ref{eq:disjoint_solids}) are false, that is to say that $|f_1(t)|<1$ and $|f_2(t)|<1$. Remark that
		\begin{equation*}
			ad - bc = f_1(t)(d-b) - f_2(t)(c-a).
		\end{equation*}
		Using this equality together with the triangle inequality, we get that
		\begin{equation*}
			|ad - bc| \leq |f(t)| \cdot |d - b| + |g(t)| \cdot |c - a|.
		\end{equation*}
		Due to the assumption that $|f_1(t)|<1$ and $|f_2(t)|<1$, we deduce that $|ad - bc| \leq |a - c| + |b - d|$ which explicitly contradicts the hypothesis that the fifth condition holds. So, such a $t\in [0,1]$ does not exist, and the solids $H_u$ and $H_v$ are disjoint.
		
		\medskip
		
		Let us prove the reverse implication by the contrapositive. We assume that none of the five conditions holds, and want to prove that the solids intersect. Up to symmetry, one can also assume that $c>a$ and $d>b$. Let $p = c - a$ and $q = d-b$, note that by assumption they are both strictly positive. Let $I_1$ and $I_2$ be the sets where the inequalities~(\ref{eq:disjoint_solids}) are not verified, that is to say:
		\begin{equation*}
			I_1 = \{ t\in[0,1] \mid |a-pt| < 1 \} \quad\text{and}\quad I_2 = \{ t\in[0,1] \mid |b-qt| < 1 \}.
		\end{equation*}
		Due to the assumption on $a$, $b$, $c$ and $d$, these sets can be described explicitly by:
		\begin{equation*}
			I_1 = \left( \max \left( 0, \frac{-1-a}{p} \right), \min \left( 1, \frac{1-a}{p} \right) \right)
			\quad\text{and}\quad
			I_2 = \left( \max \left( 0, \frac{-1-b}{q} \right), \min \left( 1, \frac{1-b}{q} \right) \right).
		\end{equation*}
		Since the four first conditions fails, we can easily check that $I_1 \neq \emptyset$ and $I_2 \neq \emptyset$. We claim that these intervals overlap. By contradiction, assume that they are disjoint. Without loss of generality we can assume that $I_1$ is at the left of $I_2$, that is the upper bound of $I_1$ is smaller than the lower bound of $I_2$. Since both are not empty and that they are assume to be disjoint, then the upper bound of $I_1$ cannot be $1$ and the lower bound of $I_2$ cannot be $0$. So, we have the inequality:
		\begin{equation*}
			\frac{1-a}{p} \leq \frac{-1-b}{q}.
		\end{equation*}
		Since $p$ and $q$ are strictly positive, we can rearrange the terms as:
		\begin{equation*}
			p+q \leq qa - pb.
		\end{equation*}
		Remark that $qa - pb = ad - bc$, so we have that $ad - bc \geq p+q = |c-a| + |d-b|$. This implies that $|ad-bc| \geq |a-c| + |b-d|$, which contradicts the assumption that Condition~(5) fails. Therefore, $I_1 \cap I_2 \neq \emptyset$. Consider any $t\in I_1 \cap I_2$. By definition, we have: $0 \leq t \leq 1$, $|(1-t)a + tc| < 1$ and $|(1-t)b + td| < 1$. By the inequalities~(\ref{eq:disjoint_solids}), we deduce that the solids $H_u$ and $H_v$ intersect, which complete the proof.
	\end{proof}
	
	We claimed at the end of Step~2 that for sufficiently large $k$, $k$-grids satisfying Conditions~(\ref{eq:min/max_non_crossing_condition}) or~(\ref{eq:last_non_crossing_condition}) always exist. Why is that true? We can think about the convex hulls $H_u$ and $H_v$ as open tubular neighborhoods of the segments $\ell_u^i$ and $\ell_v^i$ respectively with square cross-section. Since the non-crossing condition between two segments is Zariski-closed, then for any small enough deformation of $\ell_u^i$ and $\ell_v^i$ the non-crossing condition will be respected. In other words, if we replace $\ell_u^i$ and $\ell_v^i$ with two segments $\tilde{\ell}_u^i$ and $\tilde{\ell}_v^i$ contained in their respective tubular neighborhoods, then the braid formed by these two segments $\tilde{\ell}_u^i$ and $\tilde{\ell}_v^i$ is isotopic to the one form by $\ell_u^i$ and $\ell_v^i$. But, any tubular neighborhood of $\ell_u^i$ (resp. $\ell_v^i$) contains an $H_u$ (resp. $H_v$) for a sufficiently small square section. Obviously, $H_u$ and $H_v$ are disjoint and thus verifies the conditions given in Theorem~\ref{thm:conditions}.
	
	\begin{rmk}
		The previous paragraph shows that the necessary and sufficient condition in Theorem~\ref{thm:conditions} is needed to assure the existence of the good $k$-grids at Step~2.
	\end{rmk}

	\subsection{Synchronous crossings}\label{subsec:multiple_intersection}
	
	When describing the braid $b_i$ in Step~3, we noted that multiple pairs of segments might intersect simultaneously. The claimed solution to this problem is to proceed according to adjacency. Before proving that it actually works, let's make an example.

	\subsubsection{Example} Assume that we have three lines $L_u$, $L_v$ and $L_w$ such that $g_D(L_u)= (3,1)$, $g_D(L_v) = (2,2)$, $g_D(L_w)=(1,3)$, and $g_{D_i}(L_u) = (1,1)$, $g_{D_i}(L_v) = (2,2)$, $g_{D_i}(L_w)=(3,3)$. Thus the vector $\nu$ is $(w,v,u)$, the three pairs $(u,v)$, $(u,w)$ and $(v,w)$ create a crossing at $t = 1/2$.
	\begin{enumerate}
		\item In the vector $\nu$, only the pairs $(v,w)$ and $(u,v)$ are adjacent, so we consider one of them first. Let's take $(v,w)$. The index of $w$ (resp. $v$) in $\nu$ is $1$ (resp. $2$). Since the index of $w$ in $\nu$ is smaller than that one of $v$, the first term of $b_i$ is $\sigma_{\min(1,2)}^{e_{w,v}}=\sigma_1^{-1}$. Then, we exchange the position of $v$ and $w$ in $\nu$. After this first step, we have: $b_i=\sigma_1^{-1}$ and $\nu = (v,w,u)$.
		
		\item Having handled the pair $(v,w)$, we still need to process the pairs $(u,v)$ and $(u,w)$. In $\nu=(v,w,u)$, only the pair $(u,w)$ is adjacent. The index of $u$ in $\nu$ is $3$ while the one of $w$ is $2$. Thus, we add to $b_i$ the term $\sigma_{\min(2,3)}^{e_{w,u}} = \sigma_2^{-1}$, then we exchange the position in $\nu$ of $u$ and $w$. At the end of that step, we have: $b_i = \sigma_1^{-1} \sigma_2^{-1}$ and $\nu = (v,u,w)$.
		
		\item The last pair to handle is $(u,v)$, and they are adjacent in $\nu$. The index of $u$ in $\nu$ is $2$ and the one of $v$ is $1$. So, we add to $b_i$ the term $\sigma_{\min(1,2)}^{e_{v,u}} = \sigma_1^{-1}$, and we exchange the position of $u$ and $v$ in $\nu$. Thus, we obtain $b_i = \sigma_1^{-1} \sigma_2^{-1}\sigma_1^{-1}$ and $\nu = (u,v,w)$.
	\end{enumerate}
	
	\begin{rmk}
		If in step (1) we select $(u,v)$ instead of $(v,w)$, we obtain $b_i = \sigma_2^{-1} \sigma_1^{-1}\sigma_2^{-1}$ and $\nu = (u,v,w)$. Note that this braid $b_i$ is equivalent to the one above.
	\end{rmk}

	\subsubsection{Braid-equivalence}
	
	First, let us recall that
	\begin{equation*}
		\BB_n = \langle \sigma_1, \dots , \sigma_{n-1} \mid \sigma_i \sigma_{i+1}\sigma_i = \sigma_{i+1} \sigma_i \sigma_{i+1}, \sigma_i\sigma_j = \sigma_j\sigma_i \rangle,
	\end{equation*}
	where $1\leq i \leq n-2$ in the first group of relations and $|i-j|\geq 2$ in the second one. Similarly, the symmetric group has the following presentation:
	\begin{equation*}
		S_n = \langle s_1, \dots , s_{n-1} \mid s_i s_{i+1} s_i = s_{i+1} s_i s_{i+1}, s_i s_j = s_j s_i, s_i^2 = 1 \rangle.
	\end{equation*}
	We have the usual morphism $\phi:\BB_n \rightarrow S_n$, defined by $\phi(\sigma_i) = s_i$. With the previous presentations, we have that $\BB_n$ is the Artin group associated to the Coxeter group $S_n$.
	
	\medskip
	
	Second, note that when a multiple crossing appears, it consists necessarily of $\frac{m(m-1)}{2}$ pairs. Indeed, if $(u,v)$ and $(u,w)$ intersect at time $t$, then so does $(v,w)$. Let $b$ and $b'$ denote two braids obtained by choosing different adjacency orderings. For both of them, the order of the strands is reversed, $\phi(b) = \phi(b') = w_0$, where $w_0$ is the longest element (reversal permutation) in $S_m$. This yields two words $\omega$ and $\omega'$ representing $w_0$. Moreover, these words are reduced because the length of $w_0$ in $S_n$ is $\frac{n(n-1)}{2}$. By Matsumoto theorem~\cite{Mat}, one can transform $\omega$ into $\omega'$ using only Artin braid relations: $s_i s_{i+1} s_i = s_{i+1} s_i s_{i+1}$ and $s_i s_j = s_j s_i$. Since for any given pair of strands, the sign of their crossing is identical in $b$ and $b'$, the transformation from $\omega$ to $\omega'$ induces a transformation from $b$ to $b'$. As a consequence, $b$ and $b'$ are braid-equivalent.
	
	\begin{rmk}
		An intuitive way to see why this adjacency procedure works is to apply a small perturbation to each strand in such a way that the crossing appears in the desired order.
	\end{rmk}

	
	\bibliographystyle{plain}
	\bibliography{Bibtex}
	
\end{document}